  \theoremstyle{definition}
  \newtheorem{example}{\protect\examplename}[section]
  \theoremstyle{definition}
  \newtheorem{defn}{\protect\definitionname}[section]
  \theoremstyle{remark}
  \newtheorem*{rem}{Remark}
  \theoremstyle{plain}
  \newtheorem{thm}{\protect\theoremname}[section]
  \theoremstyle{plain}
  \newtheorem{prop}{\protect\propositionname}[section]
\DeclareMathOperator{\sgn}{sign}
\DeclareMathOperator{\diag}{diag}
\author{Lin Jiu\\
Department of Mathematics and Statictics\\
Dalhousie University\\
6316 Coburg Road\\
Halifax, Nova Scotia, Canada B3H 4R2\\
Lin.Jiu@dal.ca\\
\\
and\\
\\
Diane Yahui Shi\footnote{Corresponding author}\\
School of Mathematics\\
Tianjin University\\
Tianjin 300072, P. R. China\\
shiyahui@tju.edu.cn}
\date{}
  \providecommand{\definitionname}{Definition}
  \providecommand{\examplename}{Example}
  \providecommand{\propositionname}{Proposition}
\providecommand{\theoremname}{Theorem}
\begin{document}

\title{Matrix Representations for Multiplicative Nested Sums}
\maketitle
\begin{abstract}
We study multiplicative nested sums, which are generalizations
of harmonic sums, and provide a calculation through multiplication
of index matrices. Special cases interpret the index matrices as stochastic
transition matrices of random walks on a finite number of sites. Relations
among multiplicative nested sums, which are generalizations of relations
between harmonic series and multiple zeta functions, can be easily
derived from identities of the index matrices. Combinatorial identities
and their generalizations can also be derived from this computation.
\end{abstract}
\emph{Keywords: harmonic sum, multiple zeta function, random walk,
combinatorial identity}

2010 Classification: Primary 11C20, Secondary 05A19

\section{Introduction}

The harmonic sums, defined by \cite[eq.~4, pp.~1]{HarmonicSSumDEF}
\begin{equation}
S_{i_{1},\ldots,i_{k}}\left(N\right)=\sum_{N\geq n_{1}\geq\cdots\ge n_{k}\ge1}\frac{\sgn\left(i_{1}\right)^{n_{1}}}{n_{1}^{\left|i_{1}\right|}}\times\cdots\times\frac{\sgn\left(i_{k}\right)^{n_{k}}}{n_{k}^{\left|i_{k}\right|}},\label{eq:HarmonicSSum}
\end{equation}
and \cite[pp.~168]{HarmonicH}
\begin{equation}
H_{i_{1},\ldots,i_{k}}\left(N\right)=\sum_{N>n_{1}>\cdots>n_{k}\ge1}\frac{\sgn\left(i_{1}\right)^{n_{1}}}{n_{1}^{\left|i_{1}\right|}}\times\cdots\times\frac{\sgn\left(i_{k}\right)^{n_{k}}}{n_{k}^{\left|i_{k}\right|}},\label{eq:HarmonicHSum}
\end{equation}
are naturally connected to zeta functions. For instance, \\
1. taking $k=1$, $i_1=x>0$ and $N\rightarrow\infty$, in either \eqref{eq:HarmonicSSum} or \eqref{eq:HarmonicHSum}, gives Riemann zeta-function $\zeta(x)$; \\
2. when $i_1,\ldots,i_k>0$ and $N\rightarrow\infty$, \eqref{eq:HarmonicHSum} becomes mulitple zeta value $\zeta(i_1,\ldots,i_k)$. \\
Applications of harmonic sums appear in various areas, such as
\cite[pp.~1]{JakobThesis}  perturbative calculations of massless or massive single scale problems in quantum field theory. Ablinger \cite[Chpt.~6]{JakobThesis} implemented the \texttt{Mathematica}
package \texttt{HarmonicSums.m}\footnote{http://www.risc.jku.at/research/combinat/software/HarmonicSums/index.php},
based on the recurrence \cite[eq.~2.1, pp.~21]{HarmonicSums} that is
inherited from the quasi-shuffle relation \cite[eq.~1, pp.~51]{QuasiShuffle},
for calculation of harmonic sums.

The current work here is to present an alternative calculation for, not only harmonic sums, but also for the general sums defined as follows.

\begin{defn}
We consider the following \emph{multiplicative nested sums} (MNS):
for $m,N\in\mathbb{N}$,
\begin{equation}
S\left(f_{1},\ldots,f_{k};N,m\right):=\sum_{N\geq n_{1}\geq\cdots\geq n_{k}\geq m}f_{1}\left(n_{1}\right)\cdots f_{k}\left(n_{k}\right),\label{eq:S}
\end{equation}
and 
\begin{equation}
A\left(f_{1},\ldots,f_{k};N,m\right):=\sum_{N>n_{1}>\cdots>n_{k}\geq m}f_{1}\left(n_{1}\right)\cdots f_{k}\left(n_{k}\right).\label{eq:A}
\end{equation}
That is, the usual summand is multiplicative 
$f\left(n_{1},\ldots,n_{k}\right)=f_{1}\left(n_{1}\right)\cdots f_{k}\left(n_{k}\right)$, and the summation indices are nested. Here, for all \emph{$l=1,\ldots,k$},
$f_{l}$ can be any function defined on $\left\{ m,m+1,\ldots,N\right\} $,
unless $N=\infty$ when convergence needs to be taken into consideration. 
\end{defn}
\begin{rem}
Let $f_k(x):=\sgn(x)^{i_k}/x^{i_k}$, then, \eqref{eq:S} gives \eqref{eq:HarmonicSSum} and \eqref{eq:A} gives \eqref{eq:HarmonicHSum}
\end{rem}

In Section \ref{sec:GMR}, we present the main theorem, i.e., the calculation for MNS, by associating to each
function $f_{l}$ an \emph{index matrix} and then considering the multiplications. Since MNS are generalizations of harmonic sums, this method naturally works for harmonic sums. Rather than recursively applying the quasi-shuffle relations in \cite{JakobThesis}, this matrix calculation is more direct and also simultaneously calculate for multiple pairs of $N$ and $m$. 
Properties of index matrix, such as inverse, identities, and eigenvalues, eigenvectors,
diagonalization, follow after the main theorem. 

Applications of this matrix calculation, presented in Section \ref{sec:Applications}, connect different fields. Originally, this idea was
inspired by constructing random walks for special harmonic sums. Different
types of random walks appear in and
connect to various fields. For example, the coefficients connecting
Euler polynomials and generalized Euler polynomials \cite[eq.~3.8, pp.~781]{EulerRW}
appear in a random walk over a finite number of sites \cite[Note 4.8, pp.~787]{EulerRW}.
In Subsection \ref{subsec:RW}, the special sum when $f_{1}=\cdots=f_{k}=x^{-a}$
for $a\geq1$ is interpreted as the probability of a certain random walk, while the index matrix is exactly the corresponding stochastic
matrix.

Consider the limit case of harmonic sums \eqref{eq:HarmonicSSum} and \eqref{eq:HarmonicHSum}, as $N\rightarrow\infty$ and further assuming $i_{1},\ldots,i_{k}>0$, i.e., $S\left(1/x^{i_1},\ldots,1/x^{i_k};\infty,1\right)$ and $A\left(1/x^{i_1},\ldots,1/x^{i_k};\infty,1\right)$. The relation between them are of great importance and interest. For instance, the fact 
\[
S\left(\frac{1}{x^2},\frac{1}{x};\infty,1\right)=2A\left(\frac{1}{x^3};\infty,1\right)=2\zeta\left(3\right)
\]
has been well studied and rediscovered many times. Hoffman \cite[Thm.~2.1, pp.~277, Thm.~2.2, pp.~278]{Hoffman} obtained the symmetric sums of $S\left(1/x^{i_1},\ldots,1/x^{i_k};\infty,1\right)$ and 
$A\left(1/x^{i_1},\ldots,1/x^{i_k};\infty,1\right)$ in terms of the Riemann zeta-function $\zeta$. In particular, when $k=2$ and $k=3$,  the direct relations between
$S\left(1/x^{i_1},\ldots,1/x^{i_k};\infty,1\right)$ and $A\left(1/x^{i_1},\ldots,1/x^{i_k};\infty,1\right)$
(see \cite[pp.~276]{Hoffman} or \eqref{eq:SATwoZeta} and \eqref{eq:SAThreeZeta} below) are also easy to obtain. In Subsection \ref{subsec:SANDA},
we provide the truncated and generalized version of these relations,
easily derived from identities of the index matrices.

Finally, we focus on combinatorial identities, where harmonic sums
also appear. For instance, Dilcher \cite[Cor.~3, pp.~93]{Dilcher1}
established, for special harmonic sum, 
\begin{equation}
S_{\underset{k}{\underbrace{1,\ldots,1}}}\left(N\right)=\sum_{l=1}^{N}{N \choose l}\frac{\left(-1\right)^{l-1}}{l^{k}},\label{eq:Dilcher}
\end{equation}
from $q$-series of divisor functions. In Subsection \ref{subsec:COMB},
we present examples of combinatorial identities, including a generalization of
(\ref{eq:Dilcher}). Here, those identities are obtained by applying calculations and properties of the index matrices. Therefore, all the examples can be viewed as alternative proofs. 

\section{\label{sec:GMR}Matrix computation and properties}

\subsection{Index matrices and computations for MNS}
\begin{defn}
Given a positive integer $N$ and a function $f$ on $\left\{ 1,\ldots,N\right\} $,
we define the following $N\times N$ (lower triangular)\emph{ index
matrices}:

\begin{equation}
\mathbf{S}_{f}:=\left(\begin{array}{ccccc}
f\left(1\right) & 0 & 0 & \cdots & 0\\
f\left(2\right) & f\left(2\right) & 0 & \cdots & 0\\
\vdots & \vdots & \vdots & \ddots & \vdots\\
f\left(N\right) & f\left(N\right) & f\left(N\right) & \cdots & f\left(N\right)
\end{array}\right)
\label{eq:SIM}
\end{equation}
and 
\begin{equation}
\mathbf{A}_{f}:=\left(\begin{array}{cccccc}
0 & 0 & 0 & \cdots & 0 & 0\\
f\left(1\right) & 0 & 0 & \cdots & 0 & 0\\
f\left(2\right) & f\left(2\right) & 0 & \cdots & 0 & 0\\
\vdots & \vdots & \vdots & \ddots & \vdots & \vdots\\
f\left(N-1\right) & f\left(N-1\right) & f\left(N-1\right) & \cdots & f\left(N-1\right) & 0
\end{array}\right).
\label{eq:AIM}
\end{equation}
\end{defn}
\begin{rem}
1.  Shifting $\mathbf{S}_{f}$ downward by one row gives $\mathbf{A}_{f}$,
i.e., 
\begin{equation}
\mathbf{A}_{f}=\left(\delta_{i-1,j}\right)_{N\times N}\mathbf{S}_{f}\text{, where }\delta_{a,b}=\begin{cases}
1, & \text{if }a=b;\\
0, & \text{otherwise.}
\end{cases}\label{eq:S2A}
\end{equation}
For simplicity, we further denote $\mathbf{\Delta}:=\left(\delta_{i-1,j}\right)_{N\times N}$
so that $\mathbf{A}_{f}=\mathbf{\Delta}\mathbf{S}_{f}$. 

\noindent 2. When the dimensions of index matrices needs to be clarified, we use $\mathbf{S}_{N\mid f}$ and $\mathbf{A}_{N\mid f}$.
\end{rem}
\begin{thm}
\label{Thm:Main}Let 
\[
\mathbf{P}=\left(\begin{array}{ccccc}
1 & 0 & 0 & \cdots & 0\\
1 & 1 & 0 & \cdots & 0\\
\vdots & \vdots & \vdots & \ddots & \vdots\\
1 & 1 & 1 & \cdots & 1
\end{array}\right)_{N\times N}.
\]
Then, we have
\begin{equation}
S\left(f_{1},\ldots,f_{k};N,m\right)=\left(\mathbf{P}\cdot\prod_{l=1}^{k}\mathbf{S}_{f_{l}}\right)_{N,m},\label{eq:MatrixS}
\end{equation}
\begin{equation}
A\left(f_{1},\ldots,f_{k};N,m\right)=\left(\mathbf{P}.\prod_{l=1}^{k}\mathbf{A}_{f_{l}}\right)_{N,m},\label{eq:MatrixA}
\end{equation}
where $\mathbf{M}_{i,j}$ denotes the entry located at the $i$\textsuperscript{th}
row and $j$\textsuperscript{th} column of a matrix $\mathbf{M}$. 
\end{thm}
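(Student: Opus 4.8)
The plan is to prove the two formulas by induction on the depth $k$, exploiting the lower-triangular structure of the index matrices so that the matrix entry $\left(\mathbf{P}\cdot\prod_{l}\mathbf{S}_{f_l}\right)_{N,m}$ unfolds into exactly the nested sum. The key observation is that, because $\mathbf{S}_{f}$ has the constant-row shape in \eqref{eq:SIM}, its $(i,j)$ entry is $f(i)$ when $i\ge j$ and $0$ otherwise; equivalently $\left(\mathbf{S}_{f}\right)_{i,j}=f(i)\,[\,i\ge j\,]$. Likewise $\mathbf{P}_{i,j}=[\,i\ge j\,]$ is the ``$\ge$'' indicator. The heart of the computation is therefore the behaviour of a product of such matrices under the $(i,j)$-entry map.

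First I would record the base case $k=1$: by the row-constancy of $\mathbf{S}_{f_1}$ we get $\left(\mathbf{P}\cdot\mathbf{S}_{f_1}\right)_{N,m}=\sum_{j=1}^{N}\mathbf{P}_{N,j}\left(\mathbf{S}_{f_1}\right)_{j,m}=\sum_{j=1}^{N}\left(\mathbf{S}_{f_1}\right)_{j,m}=\sum_{j\ge m}f_1(j)=S(f_1;N,m)$, matching \eqref{eq:S}. Next, for the inductive step, I would isolate the last factor: write $\mathbf{P}\cdot\prod_{l=1}^{k}\mathbf{S}_{f_l}=\bigl(\mathbf{P}\cdot\prod_{l=1}^{k-1}\mathbf{S}_{f_l}\bigr)\cdot\mathbf{S}_{f_k}$, so that
\begin{equation}
\left(\mathbf{P}\cdot\prod_{l=1}^{k}\mathbf{S}_{f_l}\right)_{N,m}=\sum_{j=1}^{N}\left(\mathbf{P}\cdot\prod_{l=1}^{k-1}\mathbf{S}_{f_l}\right)_{N,j}\bigl(\mathbf{S}_{f_k}\bigr)_{j,m}=\sum_{j\ge m}f_k(j)\left(\mathbf{P}\cdot\prod_{l=1}^{k-1}\mathbf{S}_{f_l}\right)_{N,j}.
\end{equation}
The subtlety is that the induction hypothesis gives the $(N,m)$ entry of $\mathbf{P}\cdot\prod_{l=1}^{k-1}\mathbf{S}_{f_l}$ as $S(f_1,\dots,f_{k-1};N,m)$, but here I need its $(N,j)$ entry, which is $S(f_1,\dots,f_{k-1};N,j)$ — so the induction must be stated for all lower limits simultaneously (which the theorem already does, since $m$ is a free parameter and $\mathbf P$ is fixed at size $N$). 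Substituting, $\sum_{j\ge m}f_k(j)\,S(f_1,\dots,f_{k-1};N,j)=\sum_{j\ge m}f_k(j)\sum_{N\ge n_1\ge\cdots\ge n_{k-1}\ge j}f_1(n_1)\cdots f_{k-1}(n_{k-1})$, and reindexing $n_k:=j$ collapses this to $\sum_{N\ge n_1\ge\cdots\ge n_k\ge m}f_1(n_1)\cdots f_k(n_k)=S(f_1,\dots,f_k;N,m)$, completing the induction.

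For \eqref{eq:MatrixA} I would run the same induction, now using $\bigl(\mathbf{A}_{f}\bigr)_{i,j}=f(i-1)\,[\,i-1\ge j\,]$ (read off from \eqref{eq:AIM}, with the convention that row $1$ is zero), equivalently $\mathbf{A}_f=\mathbf{\Delta}\mathbf{S}_f$ from \eqref{eq:S2A}; the extra shift $\mathbf{\Delta}$ is exactly what turns each weak inequality $\ge$ into a strict one $>$ at the corresponding level, so the base case gives $\left(\mathbf{P}\mathbf{A}_{f_1}\right)_{N,m}=\sum_{j\ge m}f_1(j)\,[\,j<N\,]=\sum_{N>n_1\ge m}f_1(n_1)=A(f_1;N,m)$, and the inductive step proceeds verbatim with $\ge$ replaced by $>$ at each newly introduced index. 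The main obstacle, such as it is, is purely bookkeeping: being careful that the induction is carried with the lower limit $m$ (and, for the $\mathbf A$ case, the strict versus weak boundary) kept as a universally-quantified parameter rather than fixed, and handling the top row of $\mathbf A_f$ (and the edge effect at $n_1=N$) correctly in the strict case. I do not expect any genuine difficulty beyond tracking these indices.
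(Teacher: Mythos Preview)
Your proposal is correct and follows essentially the same route as the paper: induction on the depth $k$, peeling off the rightmost index matrix and using that $(\mathbf{S}_{f})_{j,m}=f(j)[\,j\ge m\,]$ (respectively the shifted form for $\mathbf{A}_f$) to convert the matrix product into the nested sum. The paper phrases the induction for a general entry $(i,j)$ rather than fixing the row at $N$, but since the statement is already uniform in the lower limit $m$, your version is equivalent; you correctly flag this as the one point requiring care.
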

\begin{proof}
Since the proof for $A(f_1,\ldots,f_k;N,m)$ is similar, we shall only prove the stronger
result for $S(f_1,\ldots,f_k;N,m)$, namely,
\[
S\left(f_{1},\ldots,f_{k};i,j\right)=\left(\mathbf{P}\cdot\prod_{l=1}^{k}\mathbf{S}_{f_{l}}\right)_{i,j}.
\]

\noindent 1. When $k=1$, it is easy to see that $\left(\mathbf{P}\cdot\mathbf{S}_{f_{1}}\right)_{i,j}=\overset{i}{\underset{l=j}{\sum}}f_{1}(l)=S\left(f_{1};i,j\right).$

\noindent 2. Suppose $S\left(f_{1},\ldots,f_{k};i,j\right)=\left(\mathbf{P}\cdot\overset{k}{\underset{l=1}{\prod}}\mathbf{S}_{f_{l}}\right)_{i,j}$.
Then,
\begin{align*}
S\left(f_{1},\ldots,f_{k+1};i,j\right) & =\left(\mathbf{P}\cdot\prod_{l=1}^{k+1}\mathbf{S}_{f_{l}}\right)_{i,j}=\left(\left(\mathbf{P}\cdot\overset{k}{\underset{l=1}{\prod}}\mathbf{S}_{f_{l}}\right)\cdot\mathbf{S}_{f_{k+1}}\right)_{i,j}\\
 & =\sum_{l=j}^{i}S\left(f_{1},\ldots,f_{k};i,l\right)f_{k+1}\left(l\right)\\
 & =\sum_{l=j}^{i}f_{k+1}\left(l\right)\sum_{i\geq n_{1}\geq\cdots\geq n_{k}\geq l}f_{1}\left(n_{1}\right)\cdots f_{k}\left(n_{k}\right)\\
 & =\sum_{i\geq n_{1}\geq\cdots\geq n_{k}\geq l\geq j}f_{1}\left(n_{1}\right)\cdots f_{k}\left(n_{k}\right)f_{k+1}\left(l\right)\\
 & =S\left(f_{1},\ldots,f_{k+1};i,j\right).
\end{align*}
\end{proof}

\subsection{Properties of the index matrix $\mathbf{S}$}

To simplify expressions in this section, we denote 
\[
\mathbf{S}_{a}:=\left(\begin{array}{ccccc}
a_{1} & 0 & 0 & \cdots & 0\\
a_{2} & a_{2} & 0 & \cdots & 0\\
\vdots & \vdots & \vdots & \ddots & \vdots\\
a_{N} & a_{N} & a_{N} & \cdots & a_{N}
\end{array}\right)
\]
and $\mathbf{A}_{a}:=\mathbf{\Delta}\mathbf{S}_{a}$. In another word, we assume, in \eqref{eq:SIM} and \eqref{eq:AIM}, $f(l)=a_l$ for all $l=1,\ldots,N$, so that we replace the lower index $f$ by $a$. Next, we give
some properties of $\mathbf{S}_{a}$.
\begin{prop}
1. The inverse of $\mathbf{S}_{a}$ is given by 
\[
\mathbf{S}_{a}^{-1}=\left(\begin{array}{cccccc}
1/a_{1} & 0 & 0 & \cdots & 0 & 0\\
-1/a_{1} & 1/a_{2} & 0 & \cdots & 0 & 0\\
0 & -1/a_{2} & 1/a_{3} & \cdots & 0 & 0\\
\vdots & \vdots & \vdots & \ddots & \vdots & \vdots\\
0 & 0 & 0 & \cdots & -1/a_{N-1} & 1/a_{N}
\end{array}\right).
\]
2. We have the matrix identities 
\begin{equation}
\mathbf{S}_{a}^{-1}\mathbf{S}_{ab}\mathbf{S}_{b}^{-1}=\mathbf{I}-\mathbf{\Delta},\label{eq:SATwoMatrix}
\end{equation}
\begin{equation}
\mathbf{S}_{a}\mathbf{\Delta}\mathbf{S}_{b}\mathbf{\Delta}\mathbf{S}_{c}+\mathbf{S}_{ab}\mathbf{\Delta}\mathbf{S}_{c}+\mathbf{S}_{a}\mathbf{\Delta}\mathbf{S}_{bc}+\mathbf{S}_{abc}=\mathbf{S}_{a}\mathbf{S}_{b}\mathbf{S}_{c}.\label{eq:SAThreeMatrix}
\end{equation}
3. $\mathbf{S}_{a}$ has eigenvalues $\left\{ a_{1},\ldots,a_{N}\right\} $.
Suppose all the $a_{j}$ are distinct, then define $\mathbf{D}_{a}=\left(d_{i,j}\right)_{N\times N}$
and $\mathbf{E}_{a}:=\left(e_{i,j}\right)_{N\times N}$ by if $i\geq j$
\[
d_{i,j}:=\frac{a_{i}}{a_{N}}\prod_{k=i+1}^{N}\left(1-\frac{a_{k}}{a_{j}}\right)\text{ and }
e_{i,j}:=
\frac{a_{N}}{a_{i}}\overset{N}{\underset{\genfrac{}{}{0pt}{}{k=j}{k\neq i}}{\prod}}\frac{1}{1-\frac{a_{k}}{a_{i}}},
\]
otherwise $d_{i,j}=0=e_{i,j}$.
It follows that $\left(d_{1,j},\ldots,d_{N,j}\right)^{T}$ is an eigenvector of $\mathbf{S}_a$, 
with respect to $a_{j}$, and $\mathbf{D}_{a}^{-1}=\mathbf{E}_{a}$,
implying
\begin{equation}
\mathbf{S}_{a}=\mathbf{D}_{a}\diag\left(a_{1},\ldots,a_{N}\right)\mathbf{E}_{a},\label{eq:Diagonalization}
\end{equation}
where $\diag\left(a_{1},\ldots,a_{N}\right)$ means the diagonal matrix
with entries $\left\{ a_{1},\ldots,a_{n}\right\} $ on the diagonal.
\end{prop}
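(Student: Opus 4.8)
The plan is to verify each of the three parts by direct but structured matrix computations, exploiting the lower-triangular structure and the shift matrix $\mathbf{\Delta}$.

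For part 1, I would compute the product $\mathbf{S}_a \mathbf{S}_a^{-1}$ (with $\mathbf{S}_a^{-1}$ defined as the bidiagonal matrix in the statement) entrywise. Writing $\mathbf{S}_a = (a_{\max(i,j)}\,[i\ge j])$ and the candidate inverse $T$ with $T_{i,i}=1/a_i$, $T_{i+1,i}=-1/a_i$, one checks $(\mathbf{S}_a T)_{i,j} = \sum_\ell (\mathbf{S}_a)_{i,\ell} T_{\ell,j}$; only $\ell=j$ and $\ell=j+1$ contribute, and when $i\ge j+1$ both terms equal $a_i/a_j$ and $-a_i/a_j$, cancelling, while the $i=j$ case gives $a_i\cdot(1/a_i)=1$ and the $i<j$ case gives $0$. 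This is routine. An alternative I might mention: since $\mathbf{S}_a = \mathbf{P}\,\diag(a_1,\dots,a_N)$ when one notices the columns, actually $\mathbf{S}_a = \diag(a_1,\ldots,a_N)\mathbf{P}$ is false — rather $\mathbf{S}_a$ has constant rows up to the diagonal, so $\mathbf{S}_a = \diag(a_i)\cdot\mathbf{P}$, hence $\mathbf{S}_a^{-1} = \mathbf{P}^{-1}\diag(1/a_i)$, and $\mathbf{P}^{-1} = \mathbf{I}-\mathbf{\Delta}$; this gives the bidiagonal form immediately and is probably the cleanest route.

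For part 2, I would use the factorization $\mathbf{S}_a = \diag(a_i)\,\mathbf{P}$ together with $\mathbf{P}^{-1}=\mathbf{I}-\mathbf{\Delta}$ noted above, plus the elementary observation that $\mathbf{S}_{ab}=\diag(a_i b_i)\mathbf{P}$ and that diagonal matrices commute in the obvious ways. For \eqref{eq:SATwoMatrix}, $\mathbf{S}_a^{-1}\mathbf{S}_{ab}\mathbf{S}_b^{-1} = \mathbf{P}^{-1}\diag(1/a_i)\diag(a_ib_i)\mathbf{P}\,\mathbf{P}^{-1}\diag(1/b_i) = \mathbf{P}^{-1}\diag(b_i)\diag(1/b_i) = \mathbf{P}^{-1} = \mathbf{I}-\mathbf{\Delta}$. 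For \eqref{eq:SAThreeMatrix}, I would substitute $\mathbf{\Delta} = \mathbf{I}-\mathbf{P}^{-1}$ and expand; writing everything in terms of $\mathbf{P}$ and the diagonals $\diag(a_i),\diag(b_i),\diag(c_i)$, the left side telescopes: $\mathbf{S}_a\mathbf{\Delta}\mathbf{S}_b\mathbf{\Delta}\mathbf{S}_c = \mathbf{S}_a(\mathbf{I}-\mathbf{P}^{-1})\mathbf{S}_b(\mathbf{I}-\mathbf{P}^{-1})\mathbf{S}_c$, and $\mathbf{S}_b\mathbf{P}^{-1} = \diag(b_i)$, so the cross terms collapse and the four terms on the left sum to $\mathbf{S}_a\mathbf{S}_b\mathbf{S}_c$. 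Alternatively, one can prove \eqref{eq:SAThreeMatrix} combinatorially by applying Theorem~\ref{Thm:Main}: both sides, when sandwiched by $\mathbf{P}$ appropriately, encode the stuffle/quasi-shuffle decomposition of $S(a;i,\cdot)\cdot S(b;\cdot)\cdot S(c;\cdot)$ versus the nested sum, i.e. the inclusion–exclusion over which of the inequalities $n_1\ge n_2$, $n_2\ge n_3$ are strict. I would present the algebraic telescoping as the main proof since it is self-contained.

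For part 3, the eigenvalue claim is immediate since $\mathbf{S}_a$ is lower-triangular with diagonal $a_1,\dots,a_N$. The substantive part is checking that $(d_{1,j},\dots,d_{N,j})^T$ is an eigenvector for $a_j$ and that $\mathbf{E}_a = \mathbf{D}_a^{-1}$. For the eigenvector claim I would verify $(\mathbf{S}_a v)_i = a_j v_i$ where $v_i = d_{i,j}$: for $i<j$ both sides vanish; for $i\ge j$, $(\mathbf{S}_a v)_i = a_i\sum_{\ell=j}^{i} d_{\ell,j}$, so the claim reduces to the telescoping identity $\sum_{\ell=j}^{i} d_{\ell,j} = \frac{a_j}{a_i}d_{i,j}$, which I would prove by induction on $i$ using the product formula for $d_{i,j}$ — the ratio $d_{i,j}/d_{i-1,j}$ is $\frac{a_i}{a_{i-1}}\big(1-\frac{a_i}{a_j}\big)^{-1}$, making the induction step a one-line algebraic check. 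The relation $\mathbf{D}_a\mathbf{E}_a = \mathbf{I}$ then follows either from $\mathbf{E}_a$ being (up to the stated normalization) the matrix of Lagrange-interpolation coefficients dual to the $d_{i,j}$, or — more economically — from \eqref{eq:Diagonalization}: once the eigenvectors are known to be the columns of $\mathbf{D}_a$ and the $a_j$ are distinct, $\mathbf{D}_a$ is invertible and $\mathbf{D}_a^{-1}\mathbf{S}_a\mathbf{D}_a = \diag(a_1,\dots,a_N)$, so one only needs to identify $\mathbf{D}_a^{-1}$ with $\mathbf{E}_a$, which is a finite partial-fraction computation. I expect this last identification — pinning down the closed form of $\mathbf{D}_a^{-1}$ and matching it to the given $e_{i,j}$ — to be the main obstacle, essentially a Cramer's-rule or partial-fractions bookkeeping exercise with the products $\prod_{k}(1-a_k/a_j)$; everything else is triangular bookkeeping and telescoping.
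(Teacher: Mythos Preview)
Your proposal is correct and essentially parallels the paper's argument: direct computation for the inverse, algebraic manipulation with the shift matrix for the two identities, induction on $i$ for the eigenvector claim, and partial fractions for $\mathbf{D}_a^{-1}=\mathbf{E}_a$. The only differences are organizational. You work systematically with the factorization $\mathbf{S}_a=\diag(a_1,\dots,a_N)\,\mathbf{P}$ and $\mathbf{P}^{-1}=\mathbf{I}-\mathbf{\Delta}$, expanding \eqref{eq:SAThreeMatrix} directly via $\mathbf{S}_x\mathbf{P}^{-1}\mathbf{S}_y=\mathbf{S}_{xy}$; the paper instead writes $\mathbf{S}_a^{-1}=\mathbf{I}_a-\mathbf{\Delta}\mathbf{I}_a$ (the same fact in additive form) and proves \eqref{eq:SAThreeMatrix} by multiplying on the left by $\mathbf{S}_a^{-1}$ and on the right by $\mathbf{S}_c^{-1}$, thereby reducing it to \eqref{eq:SATwoMatrix}. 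For $\mathbf{D}_a^{-1}=\mathbf{E}_a$, the paper supplies the concrete step you leave as ``partial-fractions bookkeeping'': it reduces to $\sum_{t=j}^{i}\prod_{k\neq t}(a_t-a_k)^{-1}=\delta_{i,j}$, obtained from the partial-fraction expansion of $\prod_{k=j}^{i}(1-a_kz)^{-1}$ after multiplying by $z$ and sending $z\to\infty$.
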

\begin{proof}
We omit the straightforward computation and only sketch the idea here.

\noindent 1. The inverse can be easily computed.

\noindent 2. Denote $\mathbf{I}_{a}:=\diag\left(1/a_1,\ldots,1/a_N\right)$
and $\mathbf{\Delta}_{a}=\mathbf{\Delta}\mathbf{I}_{a}$ so that $\mathbf{S}_{a}^{-1}=\mathbf{I}_{a}-\mathbf{\Delta}_{a}$.
Since $\mathbf{I}_{a}\mathbf{S}_{ab}=\mathbf{S}_{b}$ (but $\mathbf{S}_{ab}\mathbf{I}_{b}\neq\mathbf{S}_{a}$)
and $\mathbf{\Delta}_{a}\mathbf{S}_{ab}=\mathbf{A}_{b}=\mathbf{\Delta}\mathbf{S}_{b}$,
one easily obtains
\[
\mathbf{S}_{a}^{-1}\mathbf{S}_{ab}\mathbf{S}_{b}^{-1}=\left(\mathbf{I}_{a}-\mathbf{\Delta}_{a}\right)\mathbf{S}_{ab}\mathbf{S}_{b}^{-1}=\mathbf{S}_{b}\mathbf{S}_{b}^{-1}-\mathbf{\Delta}\mathbf{S}_{b}\mathbf{S}_{b}^{-1}=\mathbf{I}-\mathbf{\Delta}.
\]
Similarly, multiplying by $\mathbf{S}_{a}^{-1}$ from the left and by
$\mathbf{S}_{c}^{-1}$ from the right on (\ref{eq:SAThreeMatrix}),
we obtain
\[
\mathbf{\Delta}\mathbf{S}_{b}\mathbf{\Delta}+\mathbf{S}_{a}^{-1}\mathbf{S}_{ab}\mathbf{\Delta}+\mathbf{\Delta}\mathbf{S}_{bc}\mathbf{S}_{c}^{-1}+\mathbf{S}_{a}^{-1}\mathbf{S}_{abc}\mathbf{S}_{c}^{-1}=\mathbf{S}_{b},
\]
which reduces to $\mathbf{I}-\mathbf{\Delta}=\mathbf{S}_{b}^{-1}\mathbf{S}_{bc}\mathbf{S}_{c}^{-1}$,
i.e., (\ref{eq:SATwoMatrix}). 

\noindent 3. The eigenvalues are easy to see. To verify the eigenvectors,
it is equivalent to prove that for all $i=j,\ldots,N$, we have
\begin{equation}
\sum_{l=j}^{i}a_{l}\frac{a_{i}}{a_{N}}\prod_{k=l+1}^{N}\left(1-\frac{a_{k}}{a_{j}}\right)=a_{j}\frac{a_{i}}{a_{N}}\prod_{k=i+1}^{N}\left(1-\frac{a_{k}}{a_{j}}\right),\label{eq:Eigenvector}
\end{equation}
which can be directly computed by induction on $i$. The inverse $\mathbf{D}_{a}^{-1}=\mathbf{E}_{a}$
is equivalent to
\begin{equation}
\delta_{ij}=\sum_{t=j}^{i}\frac{a_{i}}{a_{t}}\left(\prod_{k=i+1}^{N}\left(1-\frac{a_{k}}{a_{t}}\right)\right)\cdot\left(\overset{N}{\underset{\genfrac{}{}{0pt}{}{k=j}{k\neq t}}{\prod}}\frac{1}{1-\frac{a_{k}}{a_{t}}}\right),\label{eq:InverseEigenvectorMatrix}
\end{equation}
which reduces to
\begin{equation}
\sum_{t=j}^{i}\left(\overset{i}{\underset{\genfrac{}{}{0pt}{}{k=j}{k\neq t}}{\prod}}\frac{1}{a_{t}-a_{k}}\right)=\delta_{i,j}.
\label{eq:PFD}
\end{equation}
Now, consider the partial fraction decomposition \cite[eq.~1, pp.~313]{Zeng} that
\[
\frac{1}{\left(1-a_{j}z\right)\cdots\left(1-a_{i}z\right)}=\sum_{t=j}^{i}\frac{1}{1-a_{t}z}\left(\prod_{\genfrac{}{}{0pt}{}{k=j}{k\neq t}}^{i}\frac{a_{t}}{a_{t}-a_{l}}\right).
\]
By multiplying both sides
by $z$ and then letting $z\rightarrow\infty$, we obtain \eqref{eq:PFD}. Thus, the proof is complete. 
\end{proof}
\begin{rem}
The diagonalization leads to an easy computation of powers of $\mathbf{S}_a$, i.e., 
\begin{equation}
\left(\mathbf{S}_{a}\right)^{k}=\mathbf{D}_{a}\diag\left(a_{1}^{k},\ldots,a_{N}^{k}\right)\mathbf{E}_{a}.\label{eq:Powers}
\end{equation}
\end{rem}

\section{\label{sec:Applications}Applications}

\subsection{\label{subsec:RW}Random walks }

In this subsection, we let $f_{l}\left(x\right)\equiv H_{a}\left(x\right):=1/x^{a}$ for $l=1,\ldots,k$,
where $a\ge1$. Assume $a=1$, then we have

\begin{equation}
\mathbf{S}_{H_{1}}=\left(\begin{array}{ccccc}
1 & 0 & 0 & \cdots & 0\\
\frac{1}{2} & \frac{1}{2} & 0 & \cdots & 0\\
\vdots & \vdots & \vdots & \ddots & \vdots\\
\frac{1}{N} & \frac{1}{N} & \frac{1}{N} & \cdots & \frac{1}{N}
\end{array}\right).\label{eq:SN1}
\end{equation}
Now, label $N$ sites as follows:
\[
\underset{1}{\bullet}\ \underset{2}{\bullet}\ \underset{3}{\bullet}\ \ \cdots\ \underset{N-1}{\bullet}\ \underset{N}{\bullet}
\]
and consider a random walk starting from site ``$N$'', with the
rules:
\begin{itemize}
\item one can only jump to sites that are NOT to the right of the current
site, with equal probabilities;
\item steps are independent.
\end{itemize}
Let $\mathbb{P}\left(i\rightarrow j\right)$ denote the probability
from site ``$i$'' to site ``$j$''. For example, suppose we are
at site ``$6$'':
\[
\underset{1}{\bullet}\ \underset{2}{\bullet}\ \underset{3}{\bullet}\ \underset{4}{\bullet}\ \underset{5}{\bullet}\underset{6}{\overset{\text{here}}{\bullet}}\ \underset{7}{\bullet}\ \underset{8}{\bullet}\ \ldots\ \underset{N}{\bullet}
\]
then, the next step only allows to walk to sites $\left\{ 1,2,3,4,5,6\right\} $,
with probabilities:
\[
\mathbb{P}\left(6\rightarrow6\right)=\cdots=\mathbb{P}\left(6\rightarrow1\right)=\frac{1}{6}.
\]
Therefore, a typical walk is as follows:\medskip{}

\uline{STEP $1$}: walk from ``$N$'' to some site ``$n_{1}\left(\leq N\right)$'',
with $\mathbb{P}\left(N\rightarrow n_{1}\right)=\frac{1}{N}$;\smallskip{}

\uline{STEP $2$}: walk from ``$n_{1}$'' to ``$n_{2}\left(\leq n_{1}\right)$'',
with $\mathbb{P}\left(n_{1}\rightarrow n_{2}\right)=\frac{1}{n_{1}}$;\texttt{\vspace{-25bp}
}

{\LARGE{}
\[
\ \ \ \cdots\ \ \ \cdots\ \ \ \cdots\ \ \ \cdots\ \ \ \cdots\ \ \ \cdots\ \ \ \ \ \ \ \ \ \ 
\]
}\texttt{\vspace{-15bp}
}

\uline{STEP $k+1$}: walk $n_{k}\mapsto n_{k+1}\left(\leq n_{k}\right)$,
with $\mathbb{P}\left(n_{k}\rightarrow n_{k+1}\right)=\frac{1}{n_{k}}$.\medskip{}
\\
We consider the event that after $k+1$ steps, we arrive the final
destination site ``$1$'', i.e., $\mathbb{P}\left(n_{k+1}=1\right)$.
Note that site ``$1$'' is a \emph{sink}: once you reach it, you never get out. Since the steps are independent, 
\begin{equation}
\mathbb{P}\left(n_{k+1}=1\right)=\sum_{N\geq n_{1}\geq\cdots\geq n_{k}\geq1}\frac{1}{Nn_{1}\cdots n_{k}}=\frac{1}{N}S\left(\underset{k}{\underbrace{H_{1},\ldots,H_{1}}};N,1\right).\label{eq:RandomWalk41}
\end{equation}
Meanwhile, the stochastic transition matrix is exactly given by $\mathbf{S}_{H_{1}}$,
i.e., $\mathbf{S}_{H_{1}}=\left(\mathbb{P}\left(i\rightarrow j\right)\right)_{N\times N}$.
Thus,
\begin{equation}
\left(\left(\mathbf{S}_{H_{1}}\right)^{k+1}\right)_{N,1}=\mathbb{P}\left(n_{k+1}=1\right)=\frac{1}{N}S\left(\underset{k}{\underbrace{H_{1},\ldots,H_{1}}};N,1\right),\label{eq:RW41}
\end{equation}
which is a probabilistic interpretation of (\ref{eq:MatrixS}) with the
slight difference that $\mathbf{P}$ in (\ref{eq:MatrixS}) is replaced
by $\mathbf{S}_{H_{1}}$. 
\begin{rem}
When $a>1$, we could also form a similar random walk by 
\begin{itemize}
\item adding another sink ``$R$'', to the right of ``$N$'',
with $\mathbb{P}\left(R\rightarrow n\right)=\delta_{R,n}$
for $n\in\left\{ 1,\ldots,N,R\right\} $;
\item defining for $l=1,2,\ldots,N$, 
\[
\mathbb{P}\left(l\rightarrow j\right)=\begin{cases}
0, & \text{if }l<j\leq N;\\
\frac{1}{l^{a}}, & \text{if }1\leq j\leq l;\\
1-\frac{1}{l^{a-1}}, & \text{if }j=R.
\end{cases}
\]
\end{itemize}
Now for the stochastic transition matrix,
\[
\left(\begin{matrix}\mathbf{S}_{a} & *\\ * & 1\end{matrix}\right)
\Rightarrow
\left(\begin{matrix}\mathbf{S}_{a} & *\\ * & 1\end{matrix}\right)^{k+1}
=
\left(\begin{matrix}\mathbf{S}_{a}^{k+1} & *\\ * & 1\end{matrix}\right)
\]
A similar calculation for $\mathbb{P}\left(n_{k+1}=1\right)$ shows
that 
\begin{equation}
S\left(\underset{k}{\underbrace{H_{a},\ldots,H_{a}}};N,1\right)=N^{a}\left(\left(\mathbf{S}_{H_{a}}\right)^{k+1}\right)_{N,1}.\label{eq:RW4a}
\end{equation}
\end{rem}

\subsection{\label{subsec:SANDA}Relations between $S$ and $A$}
Hoffman  \cite[pp.~275--276]{Hoffman} studied the relations between the harmonic series $S\left(1/x^{i_1},\ldots,1/x^{i_k};\infty,1\right)$ and $A\left(1/x^{i_1},\ldots,1/x^{i_k};\infty,1\right)$. For example, \cite[pp.~276]{Hoffman} 
\begin{equation}
S\left(\frac{1}{x^{i_1}},\frac{1}{x^{i_2}};\infty,1\right)=A\left(\frac{1}{x^{i_1}},\frac{1}{x^{i_2}};\infty,1\right)+A\left(\frac{1}{x^{i_1+i_2}};\infty,1\right),\label{eq:SATwoZeta}
\end{equation}
\begin{align}
\nonumber S\left(\frac{1}{x^{i_1}},\frac{1}{x^{i_2}},\frac{1}{x^{i_3}};\infty,1\right)=&A\left(\frac{1}{x^{i_1}},\frac{1}{x^{i_2}},\frac{1}{x^{i_3}};\infty,1\right)+A\left(\frac{1}{x^{i_1+i_2}},\frac{1}{x^{i_3}};\infty,1\right)\\
&+A\left(\frac{1}{x^{i_1}},\frac{1}{x^{i_2+i_3}};\infty,1\right)+A\left(\frac{1}{x^{i_1+i_2+i_3}};\infty,1\right).\label{eq:SAThreeZeta}
\end{align}
Next, we will establish the truncated and generalized versions of (\ref{eq:SATwoZeta})
and (\ref{eq:SAThreeZeta}), in the sense that we truncate the series
(from both above and below) into sums, which at the same time allows
flexibility for general summands, not restricted to negative
powers.
\begin{thm}
For positive integers $N$ and $m$ with $N>m$, we have
\begin{equation}
S\left(f,g;N-1,m\right)=A\left(f,g;N,m\right)+A\left(fg;N,m\right)\label{eq:SATwo}
\end{equation}
and 
\begin{equation}
\begin{aligned}
S\left(f,g,h;N-1,m\right)=&A\left(f,g,h;N,m\right)+A\left(fg,h;N,m\right)\\
&+A\left(f,gh;N,m\right)+A\left(fgh;N,m\right).\label{eq:SAThree}
\end{aligned}
\end{equation}
\end{thm}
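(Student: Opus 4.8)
The plan is to translate both identities into statements about the index matrices and then invoke the matrix identities \eqref{eq:SATwoMatrix} and \eqref{eq:SAThreeMatrix} already proved in the previous section. First I would set up the dictionary: by Theorem~\ref{Thm:Main}, $S\left(f,g;N-1,m\right)$ is the $(N-1,m)$ entry of $\mathbf{P}\cdot\mathbf{S}_{f}\mathbf{S}_{g}$, while each term $A(\cdots;N,m)$ is the $(N,m)$ entry of $\mathbf{P}$ times a product of $\mathbf{A}$-matrices; using $\mathbf{A}_{f}=\mathbf{\Delta}\mathbf{S}_{f}$ (see \eqref{eq:S2A}), the right-hand side of \eqref{eq:SATwo} becomes the $(N,m)$ entry of $\mathbf{P}\left(\mathbf{\Delta}\mathbf{S}_{f}\mathbf{\Delta}\mathbf{S}_{g}+\mathbf{\Delta}\mathbf{S}_{fg}\right)$. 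Here I use the notational convention of the Properties subsection, writing $\mathbf{S}_{a}$ with $a_{l}=f(l)$, etc., so that $\mathbf{S}_{f}\mathbf{S}_{g}$ and $\mathbf{S}_{fg}$ make sense and the identity \eqref{eq:SAThreeMatrix} applies with $a=f$, $b=g$ (and $c=h$ in the second case). The shift between row $N-1$ on the left and row $N$ on the right is exactly accounted for by one extra factor of $\mathbf{\Delta}$, since $\left(\mathbf{\Delta}\mathbf{M}\right)_{N,m}=\mathbf{M}_{N-1,m}$.

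Concretely, for \eqref{eq:SATwo} I would argue as follows. The left side equals $\left(\mathbf{P}\mathbf{S}_{f}\mathbf{S}_{g}\right)_{N-1,m}=\left(\mathbf{\Delta}\mathbf{P}\mathbf{S}_{f}\mathbf{S}_{g}\right)_{N,m}$. It would be convenient to note $\mathbf{\Delta}\mathbf{P}=\mathbf{P}-\mathbf{I}$ restricted appropriately, or more cleanly to absorb $\mathbf{P}$ differently; in fact the cleanest route is to observe that $\mathbf{P}=\mathbf{S}_{\mathbf{1}}$ where $\mathbf{1}$ is the all-ones function, so $\mathbf{P}\mathbf{S}_{f}=\mathbf{S}_{f}$ is false — rather I should keep $\mathbf{P}$ as the prefactor throughout and only manipulate what sits to its right. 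So: left side $=\left(\mathbf{P}\,\mathbf{S}_{f}\mathbf{S}_{g}\right)_{N-1,m}$ and right side $=\left(\mathbf{P}\left(\mathbf{\Delta}\mathbf{S}_{f}\mathbf{\Delta}\mathbf{S}_{g}+\mathbf{\Delta}\mathbf{S}_{fg}\right)\right)_{N,m}$. Thus it suffices to show $\left(\mathbf{P}\,\mathbf{S}_{f}\mathbf{S}_{g}\right)_{N-1,m}=\left(\mathbf{P}\,\mathbf{\Delta}\left(\mathbf{S}_{f}\mathbf{\Delta}\mathbf{S}_{g}+\mathbf{S}_{fg}\right)\right)_{N,m}$, and since $\left(\mathbf{X}\right)_{N-1,m}=\left(\mathbf{\Delta}\mathbf{X}\right)_{N,m}$ this is the entrywise consequence of $\mathbf{S}_{f}\mathbf{S}_{g}=\mathbf{S}_{f}\mathbf{\Delta}\mathbf{S}_{g}+\mathbf{S}_{fg}$, which is precisely the two-term specialization underlying \eqref{eq:SAThreeMatrix} (equivalently, rearrange \eqref{eq:SATwoMatrix}: $\mathbf{S}_{f}^{-1}\mathbf{S}_{fg}\mathbf{S}_{g}^{-1}=\mathbf{I}-\mathbf{\Delta}$ gives $\mathbf{S}_{fg}=\mathbf{S}_{f}\left(\mathbf{I}-\mathbf{\Delta}\right)\mathbf{S}_{g}=\mathbf{S}_{f}\mathbf{S}_{g}-\mathbf{S}_{f}\mathbf{\Delta}\mathbf{S}_{g}$). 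For \eqref{eq:SAThree} the same reduction turns the claim into the entrywise identity $\mathbf{S}_{f}\mathbf{S}_{g}\mathbf{S}_{h}=\mathbf{S}_{f}\mathbf{\Delta}\mathbf{S}_{g}\mathbf{\Delta}\mathbf{S}_{h}+\mathbf{S}_{fg}\mathbf{\Delta}\mathbf{S}_{h}+\mathbf{S}_{f}\mathbf{\Delta}\mathbf{S}_{gh}+\mathbf{S}_{fgh}$, which is exactly \eqref{eq:SAThreeMatrix} with $a=f$, $b=g$, $c=h$.

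I would then just need to double-check the one subtle point: that reading off the $(N,m)$ entry is legitimate, i.e., that the functions $f,g,h$ are defined on $\{m,\ldots,N\}$ as required and that the index-matrix entries invoked ($f(1),\ldots,f(N)$, etc.) only ever appear in positions $\ge m$, so that the truncation at $m$ in the sum $A(\cdots;N,m)$ matches the matrix product. This is automatic because $\mathbf{P}$'s prefactor and the lower-triangular structure mean the $(N,m)$ entry of any such product only involves $f(l),g(l),h(l)$ for $m\le l\le N$.

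The main obstacle is bookkeeping rather than mathematics: getting the off-by-one between $S(\cdots;N-1,m)$ and $A(\cdots;N,m)$ to correspond cleanly to a single $\mathbf{\Delta}$, and making sure the same $\mathbf{\Delta}$'s that appear in \eqref{eq:SAThreeMatrix} are the ones produced by expanding $\mathbf{A}_{f}=\mathbf{\Delta}\mathbf{S}_{f}$, with no stray factor on the far left or right. Once the reduction is set up, \eqref{eq:SATwo} and \eqref{eq:SAThree} follow immediately from \eqref{eq:SATwoMatrix} and \eqref{eq:SAThreeMatrix}, so no further computation is needed.
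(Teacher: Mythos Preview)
Your proposal is correct and follows essentially the same route as the paper: translate both sides via Theorem~\ref{Thm:Main} and $\mathbf{A}_f=\mathbf{\Delta}\mathbf{S}_f$, reduce to the matrix identities $\mathbf{S}_f\mathbf{S}_g=\mathbf{S}_f\mathbf{\Delta}\mathbf{S}_g+\mathbf{S}_{fg}$ (from \eqref{eq:SATwoMatrix}) and \eqref{eq:SAThreeMatrix}, and handle the $N-1$ versus $N$ shift with a single factor of $\mathbf{\Delta}$. The only step you leave implicit is that $\mathbf{\Delta}\mathbf{P}=\mathbf{P}\mathbf{\Delta}$ (both equal $\mathbf{P}-\mathbf{I}$, as you in fact noted in passing), which is what lets you pass from $\left(\mathbf{P}\,\mathbf{S}_f\mathbf{S}_g\right)_{N-1,m}=\left(\mathbf{\Delta}\mathbf{P}\,\mathbf{S}_f\mathbf{S}_g\right)_{N,m}$ to $\left(\mathbf{P}\mathbf{\Delta}\,\mathbf{S}_f\mathbf{S}_g\right)_{N,m}$; the paper handles the same bookkeeping by instead comparing the $N\times N$ product $\mathbf{P}_N\mathbf{\Delta}_N\mathbf{S}_{N\mid f}\mathbf{S}_{N\mid g}$ at entry $(N,m)$ with the $(N-1)\times(N-1)$ product $\mathbf{P}_{N-1}\mathbf{S}_{N-1\mid f}\mathbf{S}_{N-1\mid g}$ at entry $(N-1,m)$.
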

\begin{proof}
By Theorem \ref{Thm:Main}, the right-hand side of (\ref{eq:SATwo})
is given by 
\[
\left(\mathbf{P}\mathbf{A}_{f}\mathbf{A}_{g}\right)_{N,m}+\left(\mathbf{P}\mathbf{A}_{fg}\right)_{N,m}=\left(\mathbf{P}\mathbf{\Delta}\left(\mathbf{S}_{f}\mathbf{\Delta}\mathbf{S}_{g}+\mathbf{S}_{fg}\right)\right)_{N,m}.
\]
From (\ref{eq:SATwoMatrix}), we see that
\[
\mathbf{I}-\mathbf{\Delta}=\left(\mathbf{S}_{f}\right)^{-1}\mathbf{S}_{fg}\left(\mathbf{S}_{g}\right)^{-1}\Leftrightarrow\mathbf{S}_{f}\mathbf{\Delta}\mathbf{S}_{g}+\mathbf{S}_{fg}=\mathbf{S}_{f}\mathbf{S}_{g}.
\]
An easy observation, noticing the different dimensions
of matrices, shows that
\[
\left(\left(\mathbf{P}_{N}\mathbf{\Delta}_{N}\right)\left(\mathbf{S}_{N\mid f}\mathbf{S}_{N\mid g}\right)\right)_{N,m}=\left(\mathbf{P}_{N-1}\mathbf{S}_{N-1\mid f}\mathbf{S}_{N-1\mid g}\right)_{N-1,m}=S\left(f,g;N-1,m\right).
\]
Similarly, (\ref{eq:SAThree}) is equivalent to (\ref{eq:SAThreeMatrix}),
due to replacement $\left(a,b,c\right)\mapsto\left(f,g,h\right)$.
\end{proof}

\subsection{\label{subsec:COMB}Combinatorial identities}

The matrix computations in Section \ref{sec:GMR}, especially the diagonalization
for computing a matrix power, lead to alternative proofs for some combinatorial
identities and their generalizations.
\begin{example}
Butler and Karasik \cite[Thm.~4, pp.~7]{Nested} showed that if $G\left(n,k\right)$
satisfies $G\left(n,n\right)=1$, $G\left(n,-k\right)=0$ and for
$k\geq1$, $G\left(n,k\right)=G\left(n-1,k-1\right)+a_{k}G\left(n-1,k\right)$, 
then 
\[
S\left(\underset{k}{\underbrace{a,\ldots,a}},N,1\right):=\sum_{N\geq n_{1}\geq\cdots\geq n_{k}\geq1}a_{n_{1}}\cdots a_{n_{k}}=G\left(N+k,N\right),
\]
based on a proof related to Stirling numbers of the second kind. In
fact through index matrices, we could provide a direct proof without
using Stirling numbers. 

\noindent 1. When $k=1$, an induction on $N$ shows directly that 
\[
\sum_{N\geq n_{1}\geq1}a_{n_{1}}=a_{N}+G\left(N,N-1\right)=a_{N}G\left(N,N\right)+G\left(N,N-1\right)=G\left(N+1,N\right).
\]

\noindent 2. For the inductive step in $k$, similarly to (\ref{eq:RW4a}), we see, by recurrence,
\begin{align*}
S\left(\underset{k}{\underbrace{a,\ldots,a}},N,1\right) & =a_{N}\left(\prod_{l=1}^{k}\mathbf{S}_{a}\right)_{N,1}=a_{N}\left(\mathbf{S}_{a}\left(\prod_{l=1}^{k-1}\mathbf{S}_{a}\right)\right)_{N,1}\\
 & =\frac{1}{a_{N}}\sum_{m=1}^{N}a_{N}\cdot a_{m}G\left(m+k-1,m\right)\\
 & =G\left(N+k,N\right).
\end{align*}
\end{example}
\begin{example}
Suppose the $\left(a_{m}\right)_{m=1}^{N}$ are all distinct. An alternative
expression of the previous example can be obtained by diagonalization.
\begin{eqnarray*}
S\left(\underset{k}{\underbrace{a,\ldots,a}},N,1\right) & = & \frac{1}{a_{N}}\left(\mathbf{D}_{H_{a}}\diag\left\{ a_{1}^{k+1},\ldots,a_{N}^{k+1}\right\} \mathbf{E}_{H_{a}}\right)_{N,1}\\
 & = & \frac{1}{a_{N}}\sum_{j=1}^{N}a_{j}^{k+1}\left(\frac{a_{N}}{a_{j}}\prod_{\genfrac{}{}{0pt}{}{m=1}{m\neq j}}^{N}\frac{1}{1-\frac{a_{m}}{a_{j}}}\right)\\
 & = & \sum_{j=1}^{N}\left(\prod_{\genfrac{}{}{0pt}{}{m=1}{m\neq j}}^{N}\frac{1}{1-\frac{a_{m}}{a_{j}}}\right)a_{j}^{k}.
\end{eqnarray*}
This recovers a general result \cite[eq.~2, pp.~313]{Zeng}, which,
when we take $a_{j}=\frac{a-bq^{j+i-1}}{c-zq^{j+i-1}}$ and $N=n-i+1$,
``turns out to be a common source of several $q$-identities'' \cite[pp.~314]{Zeng}.
The special case $a_{m}=m^{a}$ yields
\begin{equation}
S_{\underset{k}{\underbrace{a,\ldots,a}}}\left(N\right)=\sum_{l=1}^{N}\left(\prod_{\genfrac{}{}{0pt}{}{n=1}{n\neq l}}^{N}\frac{n^{a}}{n^{a}-l^{a}}\right)\frac{1}{l^{ak}},\label{eq:GeneralDilcher}
\end{equation}
which gives (\ref{eq:Dilcher}) when $a=1$.
\end{example}
\begin{rem}
When $a=m\in\mathbb{Z}_{+}$, consider the factorization 
\[
n^{m}-l^{m}=\left(n-l\right)\left(n-\xi_{m}l\right)\cdots\left(n-\xi_{m}^{m-1}l\right),
\]
where $\xi_{m}:=\exp\left\{ \frac{2\pi i}{m}\right\} $,
and $i^{2}=-1$. 
We could obtain the following binomial-type expression 
\[
S_{\underset{k}{\underbrace{a,\ldots,a}}}\left(N\right)=\sum_{l=1}^{N}\left(\prod_{t=0}^{m-1}{N \choose \xi_{m}^{t}l}\frac{\pi\left(1-\xi_{m}^{t}\right)l}{\sin\left(\pi\xi_{m}^{t}l\right)}\right)\frac{1}{l^{mk}},
\]
which is similar to \eqref{eq:Dilcher} and the usual binomial coefficient is generalized as $\binom{x}{y}:=\frac{\Gamma\left(x+1\right)}{\Gamma\left(x+1\right)\Gamma\left(x-y+1\right)}$. 
\end{rem}

\section{Acknowledgment}

This work was initiated when the first author was a postdoc in Research
Institute for Symbolic Computation, Johannes Kepler University, supported
by SFB F50 (F5006-N15 and F5009-N15) grant, and continued when the
first author switched, as a postdoc, to Johann Radon Institute for
Applied and Computational Mathematics, Austrian Academy of Science,
supported by Austrian Science Fund (FWF) grant FWF-Projekt 29467. Also, the first author would like to thank his current supervisor Dr.~Karl Dilcher, for his careful reading and valuable suggestions on this work.

\noindent The second author was supported by the National Science
Foundation of China (No.~1140149).


\begin{thebibliography}{10}
\bibitem{JakobThesis}J.~Ablinger, \emph{Computer Algebra Algorithms
for Special Functions in Particle Physics}, PhD Thesis, Research Institute
for Symbolic Computation, Johannes Kepler University, 2012.

\bibitem{HarmonicSums}J.~Bl\"{u}mlein, Algebraic relations between harmonic
sums and associated quantities, \emph{Comput.~Phys.~Commun}.~\textbf{159}
(2004), 19--54.

\bibitem{HarmonicSSumDEF}J.~Bl\"{u}mlein and S.~Kurth, Harmonic sums
and Mellin transforms up to two-loop order, \emph{Phys.~Rev.~D}
\textbf{60} (1999), Article 014018.

\bibitem{Nested}S.~Butler and P.~Karasik, A note on nested sums,
\emph{J.~Integer Seq.}~\textbf{13} (2010), Article 10.4.4.

\bibitem{Dilcher1}K.~Dilcher, Some $q$-series identities related
to divisor functions, \emph{Discrete Math.}\textbf{~145} (1995),
83--93.

\bibitem{HarmonicH}G.~H.~E.~Duchamp, V.~H.~N.~Minh, and N.~Q.~Hoan,
Harmonic sums and polylogarithms at non-positive multi-indices, \emph{J.~Symbolic
Comput}.\textbf{~83} (2017), 166--186. 

\bibitem{Hoffman}M.~E.~Hoffman, Multiple harmonic series, \emph{Pacific
J}.~Math.~\textbf{152} (1992), 275--290.

\bibitem{QuasiShuffle}M.~E.~Hoffman, Quasi-shuffle product, \emph{J.~Algebraic
Combin}.\textbf{~11} (2000), 49--68.

\bibitem{EulerRW}L.~Jiu, V.~H.~Moll, and C.~Vignat, Identities
for generalized Euler polynomials, \emph{Integral Transforms Spec.~Funct.}~\textbf{25}
(2014), 777--789.

\bibitem{Vermaseren}J.~A.~M.~Vermaseren, Harmonic sums, Mellin transforms and integrals, \emph{Internat.~J.~Modern Phys.~A} \textbf{14} (1999), 2037--2076.

\bibitem{Zeng}J.~Zeng, On some $q$-identities related to divisor
functions, \emph{Adv.~Appl.~Math}.\textbf{~34} (2005) 313--315.
\end{thebibliography}
\end{document}